\newfont{\footsc}{cmcsc10 at 8truept}
\newfont{\footbf}{cmbx10 at 8truept}
\newfont{\footrm}{cmr10 at 10truept}
\newtheorem{theorem}{\bf Theorem}
\newtheorem{proposition}{\bf Proposition}
\newtheorem{lemma}{\bf Lemma}
\newtheorem{corollary}{\bf Corollary}
\begin{document}
\title{On Stochastic Comparisons of Order Statistics from Heterogeneous Exponential Samples}

\author{Yaming Yu\\
\small Department of Statistics\\[-0.8ex]
\small University of California\\[-0.8ex] 
\small Irvine, CA 92697, USA\\[-0.8ex]
\small \texttt{yamingy@uci.edu}}

\date{}
\maketitle

\begin{abstract}
We show that the $k$th order statistic from a heterogeneous sample of $n\geq k$ exponential random variables is larger than that from a homogeneous exponential sample in the sense of star ordering, as conjectured by Xu and Balakrishnan (2012).  As a consequence, we establish hazard rate ordering for order statistics between heterogeneous and homogeneous exponential samples, resolving an open problem of P\v{a}lt\v{a}nea (2008).  Extensions to general spacings are also presented.

{\bf Keywords}: convolution; dispersive ordering; exponential distribution; majorization; stochastic order. 
\end{abstract}

\section{Introduction and main results}
There exists a large literature on stochastic comparisons between order statistics arising from possibly heterogeneous populations, see Balakrishnan and Zhao (2013) for a review.  In reliability theory, order statistics play a prominent role as the lifetime of a $k$-out-of-$n$ system can be represented by the $n-k+1$th order statistic of the $n$ component lifetimes.  Because of the complexity of the distribution of order statistics arising from heterogeneous populations, stochastic comparisons with those from a homogeneous population are helpful, and can provide bounds on tail probabilities and hazard rates.  Of particular interest is the following result of Bon and P\v{a}lt\v{a}nea (2006).  Let $X_1,\ldots, X_n$ be independent exponential random variables with rates $\lambda_1,\ldots, \lambda_n$ respectively.  Let $Y_1,\ldots, Y_n$ be another set of independent exponential random variables with a common rate $\gamma$.  Then the $k$th order statistic of the heterogeneous sample is larger than that of the homogeneous sample in the usual stochastic order, that is, $Y_{k:n}\leq_{\rm st} X_{k:n}$, if and only if 
\begin{equation}
\label{paltanea}
\gamma\geq \left({n \choose k}^{-1} s_k(\lambda_1,\ldots, \lambda_n)\right)^{1/k},
\end{equation}
where $s_k$ denotes the $k$th elementary symmetric function.  For the sample maximum, Khaledi and Kochar (2000) showed that (\ref{paltanea}) with $k=n$ implies that $X_{n:n}$ is larger than $Y_{n:n}$ according to both the hazard rate order and the dispersive order.  In the context of failsafe systems, P\v{a}lt\v{a}nea (2008) showed that (\ref{paltanea}) with $k=2$ is equivalent to $Y_{2:n}\leq_{\rm hr} X_{2:n}$.  In the $k=2$ case, Zhao et al. (2009) obtained related results for the likelihood ratio ordering. 

In this paper we aim to extend such results to general $k$.  Specifically, we have 

\begin{theorem}
\label{thm1}
For each $1\leq k\leq n$, (\ref{paltanea}) is equivalent to 
$Y_{k:n}\leq_{\rm hr} X_{k:n}$, and also equivalent to $Y_{k:n}\leq_{\rm disp} X_{k:n}$.
\end{theorem}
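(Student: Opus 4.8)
The plan is to isolate the one genuinely new ingredient — a star-ordering comparison — and then read off both equivalences from it using soft order-theoretic facts together with the Bon--P\v{a}lt\v{a}nea stochastic-order criterion already quoted. Throughout write $F=F_{Y_{k:n}}$, $G=F_{X_{k:n}}$, and $\phi=G^{-1}\circ F$; both laws live on $[0,\infty)$ with $F(0)=G(0)=0$, so $\phi(0)=0$ and $\phi$ is smooth and strictly increasing on $(0,\infty)$.

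The engine would be the star-ordering relation $Y_{k:n}\leq_{*}X_{k:n}$, i.e.\ that $\phi(x)/x$ is nondecreasing. Because the star order is scale invariant, this statement is independent of the common rate $\gamma$ and so holds for every $\gamma>0$; it is exactly the Xu--Balakrishnan conjecture, and proving it is the main obstacle (discussed at the end). Granting it, recall that by Bon--P\v{a}lt\v{a}nea condition (\ref{paltanea}) is equivalent to $Y_{k:n}\leq_{\rm st}X_{k:n}$, which says $F(t)\geq G(t)$ for all $t$, i.e.\ $\phi(x)\geq x$. For the forward direction I would note that star ordering is the statement $\phi'(x)\geq\phi(x)/x$, while (\ref{paltanea}) gives $\phi(x)/x\geq 1$; chaining these, $\phi'(x)\geq\phi(x)/x\geq 1$, which is precisely $Y_{k:n}\leq_{\rm disp}X_{k:n}$. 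This is why the star order alone cannot suffice — being scale free it says nothing about the (scale-dependent) dispersive and hazard-rate orders — and why the stochastic-order input from (\ref{paltanea}) is essential to pin down the scale.

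To upgrade from dispersive to hazard-rate order I would first record that $Y_{k:n}$, being a convolution of independent exponentials, has a log-concave density and is therefore IFR, so $h_{Y_{k:n}}$ is nondecreasing. Differentiating $G(\phi(t))=F(t)$ and using $\bar G(\phi(t))=\bar F(t)$ rewrites the dispersive inequality in hazard form as
\[
h_{X_{k:n}}(\phi(t))=\frac{h_{Y_{k:n}}(t)}{\phi'(t)}\leq h_{Y_{k:n}}(t).
\]
Putting $s=\phi(t)$ and invoking $\phi^{-1}(s)\leq s$ (from $\phi(x)\geq x$) together with the monotonicity of $h_{Y_{k:n}}$ gives
\[
h_{X_{k:n}}(s)\leq h_{Y_{k:n}}(\phi^{-1}(s))\leq h_{Y_{k:n}}(s),
\]
that is, $Y_{k:n}\leq_{\rm hr}X_{k:n}$.

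The converses are routine. Hazard-rate order always implies stochastic order, so $Y_{k:n}\leq_{\rm hr}X_{k:n}$ yields $Y_{k:n}\leq_{\rm st}X_{k:n}$ and hence (\ref{paltanea}). For the dispersive direction, the common left endpoint forces $\phi(0)=0$; since $\phi(x)-x$ is nondecreasing and vanishes at $0$, we get $\phi(x)\geq x$, i.e.\ $Y_{k:n}\leq_{\rm st}X_{k:n}$, and again (\ref{paltanea}). I expect essentially all the difficulty to sit in the star-ordering step: it is a delicate monotonicity analysis of $\phi=G^{-1}\circ F$ in which $G$ is the distribution of a heterogeneous order statistic with no closed form, so the argument will presumably reduce to a density comparison and control how $s_k(\lambda_1,\ldots,\lambda_n)$ enters, most plausibly via a Schur-convexity argument or an induction on $n$.
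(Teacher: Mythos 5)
Your proposal is correct and follows essentially the same route as the paper: take the star ordering $Y_{k:n}\leq_* X_{k:n}$ (the paper's Theorem~\ref{thm2}) as the key input, combine it with the Bon--P\v{a}lt\v{a}nea equivalence of (\ref{paltanea}) with $\leq_{\rm st}$ to get the equivalence with $\leq_{\rm disp}$, and use the IFR property of $Y_{k:n}$ to pass from dispersive to hazard rate ordering. You merely spell out the standard details that the paper delegates to Xu and Balakrishnan (2012), and you correctly identify that all the real work lives in the star-ordering theorem, which is proved separately.
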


The statement concerning $\leq_{\rm hr}$ in Theorem~\ref{thm1} confirms a conjecture of P\v{a}lt\v{a}nea (2008).  Theorem~\ref{thm1} is established by considering the star order between $X_{k:n}$ and $Y_{k:n}$.  For random variables $X$ and $Y$ supported on $(0, \infty)$ with distribution functions $F$ and $G$ respectively, we say $X$ is smaller than $Y$ in the star order, denoted by $X\leq_* Y$ (or $F\leq_* G$), if $G^{-1}F(x)/x$ is increasing in $x>0$, where $G^{-1}$ denotes the right continuous inverse of $G$.  Equivalently, $X\leq_* Y$ if and only if, for each $c>0$, $F(cx)$ crosses $G(x)$ at most once, and from below, as $x$ increases on $(0, \infty)$.  For definitions and properties of various stochastic orders including $\leq_*$, see Shaked and Shanthikumar (2007). 

Kochar and Xu (2009) showed $Y_{n:n}\leq_* X_{n:n}$, a key result that allowed Xu and Balakrishnan (2012) to obtain stochastic comparison results for the ranges of heterogeneous exponential samples; see Genest et al. (2009) for related work.  Our Theorem~\ref{thm2} confirms the conjecture of Xu and Balakrishnan (2012) (see also Balakrishnan and Zhao (2013), Open Problem 5).  A special case of our Theorem~\ref{thm2} was obtained by Kochar and Xu (2011) for multiple-outlier models. 

\begin{theorem}
\label{thm2}
For each $1\leq k\leq n$, we have $Y_{k:n}\leq_* X_{k:n}$.
\end{theorem}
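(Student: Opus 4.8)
The plan is to deduce Theorem~\ref{thm2} from a single monotonicity principle: that $X_{k:n}$ grows in the star order as the rate vector becomes more heterogeneous in the majorization sense. Since $\leq_*$ is scale invariant ($X\leq_* Y$ implies $aX\leq_* bY$ for all $a,b>0$), the value of $\gamma$ is immaterial, so I may take $\gamma=\bar\lambda=n^{-1}\sum_i\lambda_i$; then the homogeneous rate vector $(\bar\lambda,\ldots,\bar\lambda)$ has the same sum as $(\lambda_1,\ldots,\lambda_n)$ and is majorized by it. Thus it suffices to prove
\[
(\mu_1,\ldots,\mu_n)\ \text{majorized by}\ (\lambda_1,\ldots,\lambda_n)\ \Longrightarrow\ X_{k:n}^{(\mu)}\leq_* X_{k:n}^{(\lambda)} .
\]
Because $\leq_*$ is transitive and preserved under weak limits, and because any such majorization is realized by a finite chain of Robin--Hood transfers (Hardy--Littlewood--P\'olya), this reduces to an infinitesimal statement: pushing two coordinates apart, say $\lambda_1=a+\theta$ and $\lambda_2=a-\theta$ with the remaining rates frozen, increases $X_{k:n}$ in the star order as $\theta$ grows.

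For a smooth one-parameter family $F_\theta$ with density $f_\theta$, the star order is governed by the Saunders--Moran criterion: writing $q_\theta=F_\theta^{-1}$, one has $F_\theta\leq_* F_{\theta'}$ for $\theta\le\theta'$ precisely when $\partial_\theta\log q_\theta(u)$ is increasing in $u$, equivalently when $x\mapsto -\partial_\theta F_\theta(x)/\bigl(x f_\theta(x)\bigr)$ is increasing on $(0,\infty)$. The second step is to compute the two ingredients for the order statistic. Writing $p_i=P(X_i\le x)=1-e^{-\lambda_i x}$ and $\bar p_i=e^{-\lambda_i x}$, and conditioning on whether $X_i\le x$, one gets $\partial F/\partial p_i=P(\text{exactly }k-1\text{ of the }X_j,\ j\neq i,\text{ are }\le x)=:C_i$, whence $f_\theta(x)=\sum_i\lambda_i\bar p_i C_i$; differentiating through $p_1,p_2$ gives $\partial_\theta F_\theta(x)=x(\bar p_1C_1-\bar p_2C_2)$. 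The criterion therefore becomes the assertion that
\[
x\longmapsto \frac{\bar p_2 C_2-\bar p_1 C_1}{\sum_{i=1}^n\lambda_i\bar p_i C_i}\quad\text{is increasing on }(0,\infty).
\]

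I expect this last monotonicity to be the crux. The behaviour at the endpoints is routine: near $x=0$ numerator and denominator are both leading-order polynomial in $x$, and as $x\to\infty$ the terms carrying the smallest rates dominate. The delicate point is monotonicity throughout, since the $C_i$ are themselves elementary-symmetric-type expressions in the $\bar p_j$. The natural attack is to clear denominators and show that the logarithmic derivative of the ratio, or a suitable cross-product, has a single sign change, exploiting the total-positivity (variation-diminishing) structure and log-concavity in $x$ of the probabilities $C_i$, and treating the two moving coordinates separately from the frozen block. The known endpoints---the maximum case $k=n$ of Kochar and Xu (2009) and the multiple-outlier case of Kochar and Xu (2011)---serve as sanity checks and suggest a clean single-crossing argument exists; established in full this route even yields the stronger majorization monotonicity, of which Theorem~\ref{thm2} is the balanced-vector case. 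Should the ratio prove unwieldy in full generality, a fallback is to verify $\leq_*$ directly through its crossing characterization, showing that for each $c>0$ the curve $x\mapsto P(Y_{k:n}\le cx)-P(X_{k:n}\le x)$ changes sign at most once and from below, using the origin asymptotics $P(X_{k:n}\le x)\sim s_k(\lambda_1,\ldots,\lambda_n)x^k$ and $P(Y_{k:n}\le x)\sim\binom{n}{k}\gamma^k x^k$. With Theorem~\ref{thm2} secured, Theorem~\ref{thm1} follows by combining it with the usual-stochastic-order condition \eqref{paltanea} through the known links among the star, hazard-rate, and dispersive orders.
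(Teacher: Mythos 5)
Your reduction is clean as far as it goes: scale invariance of $\leq_*$ lets you fix $\gamma=\bar\lambda$, transitivity plus the Hardy--Littlewood--P\'olya decomposition of majorization into pairwise transfers reduces everything to a one-parameter family, and your computation of the Saunders--Moran criterion (the identities $\partial F/\partial p_i=C_i$, $f=\sum_i\lambda_i\bar p_iC_i$, $\partial_\theta F=x(\bar p_1C_1-\bar p_2C_2)$) is correct. But the argument stops exactly where the theorem begins: the monotonicity of $x\mapsto(\bar p_2C_2-\bar p_1C_1)/\sum_i\lambda_i\bar p_iC_i$ is announced as ``the crux'' together with a sketch of a ``natural attack'' (clear denominators, exploit total positivity and log-concavity of the $C_i$), and nothing in the proposal establishes it. This is not a routine verification: the $C_i$ are signed combinations of elementary-symmetric-type expressions in the $\bar p_j$, the numerator changes sign in general, and no variation-diminishing kernel is actually identified. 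Worse, the intermediate statement you aim for --- that majorization of the rate vector implies star ordering of $X_{k:n}$ for every $k$ --- is strictly stronger than Theorem~\ref{thm2} and is not known in general (only special cases such as the multiple-outlier model of Kochar and Xu (2011) have been settled), so the chosen route replaces the target by a harder problem. The fallback you mention (single crossing of $P(Y_{k:n}\le cx)-P(X_{k:n}\le x)$ for each $c>0$) is just a restatement of the definition of $\leq_*$ plus the easy origin asymptotics; the single-crossing property itself is again left unproved.

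For contrast, the paper does not attempt majorization monotonicity at all. It inducts on $k$, using P\v{a}lt\v{a}nea's representation of $X_{(k+1):n}$ as ${\rm expo}(\Lambda)$ convolved with the mixture $\Lambda^{-1}\sum_i\lambda_iF_{k:(n-1)}^{[i]}$, the closure of $\leq_*$ under such mixtures (Lemma~\ref{lem1}), and then an interpolation argument: the crossing point of ${\rm expo}(\Lambda)*F_{k:(n-1)}^{(\tau)}$ with $F_{(k+1):n}$ sweeps continuously from $\infty$ to $0$ as $\tau$ runs over $(\tau_*,\tau^*)$, and the comparison of $F_{(k+1):n}^{(\gamma)}$ with that convolution is settled by a weak-majorization stochastic inequality for weighted gamma sums (Lemma~\ref{lem.yu}) together with the unique-crossing theorem for convolutions of exponentials (Lemma~\ref{lem3}). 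Those two convolution lemmas are precisely the machinery that substitutes for the unproved monotonicity in your plan; without something of comparable strength, the proposal does not constitute a proof.
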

The star ordering is a strong variability ordering.  It implies the Lorenz ordering $\leq_{\rm L}$, which in turn implies the ordering of the coefficients of variation.  Our Theorem~\ref{thm2} is a strengthening of the main result of Da et al. (2014) who showed $Y_{k:n}\leq_{\rm L} X_{k:n}$. 

Theorems~\ref{thm1} and \ref{thm2} can be extended from order statistics to general spacings.  

\begin{corollary}
\label{coro1}
For $1\leq m< k \leq n$ we have $Y_{k:n} - Y_{m:n} \leq_* X_{k:n} - X_{m:n}$.
\end{corollary}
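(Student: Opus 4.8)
The plan is to reduce the spacing comparison to the order-statistic comparison of Theorem~\ref{thm2} by means of the memoryless property, and then to lift a family of conditional star orderings to an unconditional one. I would first condition on the identity of the smallest observation: with probability $\lambda_i/\Lambda$, where $\Lambda=\sum_l\lambda_l$, the minimum is attained by unit $i$, and given this the residual lifetimes $X_l-X_{1:n}$, $l\neq i$, are independent exponentials with their original rates and independent of $X_{1:n}$. Consequently the spacing satisfies the distributional identity
\[
X_{k:n}-X_{m:n}\;\stackrel{d}{=}\;\sum_{i=1}^{n}\frac{\lambda_i}{\Lambda}\,\bigl(X^{(-i)}_{k-1:n-1}-X^{(-i)}_{m-1:n-1}\bigr),
\]
where $X^{(-i)}$ denotes a heterogeneous exponential sample on the rates $\{\lambda_l:l\neq i\}$ and the right side is read as a mixture. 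For the homogeneous sample the same step, combined with exchangeability, collapses the mixture to a single term, $Y_{k:n}-Y_{m:n}\stackrel{d}{=}Y'_{k-1:n-1}-Y'_{m-1:n-1}$, on a homogeneous sample of size $n-1$.

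This suggests an induction on $n$ in which $(m,k,n)$ is reduced to $(m-1,k-1,n-1)$. The base instance $m=1$ is exactly Theorem~\ref{thm2}, since then each mixture component is the single order statistic $X^{(-i)}_{k-1:n-1}$ and $Y'_{k-1:n-1}\leq_* X^{(-i)}_{k-1:n-1}$; here I would use that the star order is scale invariant, so that Theorem~\ref{thm2} applies with the common rate $\gamma$ and needs no side condition. For the inductive step the hypothesis supplies the conditional comparisons $W\leq_* V_i$ for every $i$, where $W$ is the homogeneous spacing $Y'_{k-1:n-1}-Y'_{m-1:n-1}$ and $V_i=X^{(-i)}_{k-1:n-1}-X^{(-i)}_{m-1:n-1}$.

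The main obstacle is to show that these conditional orderings combine into the unconditional one, i.e. that a mixture of distributions each dominating the fixed distribution of $W$ in the star order again dominates $W$. Writing $G$ for the distribution function of $W$ and $H=\sum_i(\lambda_i/\Lambda)F_{V_i}$ for that of the mixture, I must verify that for each $c>0$ the difference $G(cx)-H(x)$ has a single sign change, from negative to positive. The two extremes are free: below every individual crossing point $G(cx)\leq F_{V_i}(x)$ for all $i$, hence $G(cx)\leq H(x)$, while above every crossing point the reverse holds, so $G(cx)-H(x)$ is negative for small $x$ and positive for large $x$. The difficulty lies in the intermediate range, where the components sit on opposite sides of their individual crossings and a naive superposition could create several sign changes.

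I expect this mixture-closure step to be the heart of the argument. The route I would take is to prove that every coincidence is an up-crossing: at any $x$ with $G(cx)=H(x)$ one has $c\,g(cx)\geq h(x)$ for the corresponding densities, which rules out a down-crossing and forces uniqueness. Equivalently, this asks that the scaled reversed hazard rate $c\,g(cx)/G(cx)$ of $W$ be dominated by that of the mixture, $h(x)/H(x)$, at each coincidence point. To establish it I would exploit the explicit structure present here: the log-concavity of the homogeneous exponential order-statistic density $g$, the monotonicity of each $V_i$ in the deleted rate $\lambda_i$ (deleting a larger rate leaves slower units and yields a stochastically larger spacing), and the fact that the mixing weights $\lambda_i/\Lambda$ are themselves increasing in $\lambda_i$, so that heavier weight falls on the stochastically larger components. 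Recasting the closure as a variation-diminishing statement for the totally positive family of exponential order-statistic distributions in their rate parameter is the form in which I would try to prove it; verifying the requisite total positivity together with the density inequality at coincidence points is the step I anticipate will demand the most work.
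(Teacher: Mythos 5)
Your decomposition is the same one the paper uses: condition on which units realize the smallest observations, invoke the memoryless property to write $X_{k:n}-X_{m:n}$ as a mixture of order statistics from reduced heterogeneous samples, note that $Y_{k:n}-Y_{m:n}$ collapses to a single homogeneous order statistic $Y'_{(k-m):(n-m)}$, apply Theorem~\ref{thm2} to each mixture component, and finish by closing the star order under mixing. The paper performs the conditioning in one shot, producing the $m$-fold mixture over ordered tuples $\mathbf{r}$ in (\ref{spacing}), whereas you peel off one minimum at a time and induct on $m$; that difference is organizational, not substantive.

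The genuine gap is the mixture-closure step, which you rightly single out as the heart of the argument but do not prove. The statement you need --- if $F\leq_* G_i$ for every $i$ and $F$ has a density $f$ with $f(e^x)$ log-concave, then $F\leq_*\sum_i p_i G_i$ --- is precisely Lemma~\ref{lem1} (Lemma~3.1 of Xu and Balakrishnan (2012)), which the paper simply cites, together with the observation that homogeneous exponential order statistics satisfy the log-concavity hypothesis. Your proposed route to this closure is not on track: you aim to verify the density inequality $c\,g(cx)\geq h(x)$ at coincidence points by exploiting a positive association between the weights $\lambda_i/\Lambda$ and the stochastic size of the components $V_i$, but the lemma holds with no relation whatsoever among the mixture components; the only structural input is a property of the base distribution alone (log-concavity of $f(e^x)$, equivalently a log-concave density for $\log W$, which makes the dispersive/star order mixture-closed over the dominated side). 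The coincidence-point inequality cannot be extracted from the monotonicity and weighting facts you list, and the total-positivity reformulation is left as an unverified hope, so as written the proof does not go through in the intermediate range you correctly flag as the difficulty. If you replace that step by an appeal to Lemma~\ref{lem1}, checking its hypothesis for the density of $Y'_{(k-m):(n-m)}$, your argument becomes the paper's proof.
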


\begin{proposition}
\label{prop1}
For $1\leq m < k\leq n$ we have $Y_{k:n} - Y_{m:n} \leq_{\rm order} X_{k:n} - X_{m:n}$ if and only if 
\begin{equation}
\label{char}
{n-m\choose k-m} \gamma^{k-m} \geq \sum_{\mathbf{r}} s_{k-m}^{[\mathbf{r}]}(\lambda) \prod_{j=1}^m \frac{ \lambda_{r_j}}{\Lambda -\sum_{i=1}^{j-1} \lambda_{r_i}} 
\end{equation}
where $\mathbf{r} = (r_1, \ldots, r_m)$, the outer sum is over all permutations of $\{1, \ldots, n\}$ using $m$ at a time, $\lambda = (\lambda_1, \ldots, \lambda_n)$, $\Lambda=\sum_{i=1}^n \lambda_i$, $s_{k-m}^{[\mathbf{r}]}(\lambda) = s_{k-m}(\lambda\backslash\{\lambda_{r_1}, \ldots,\lambda_{r_m}\})$, 
and $\leq_{\rm order}$ is any of $\leq_{\rm st}, \leq_{\rm hr}$ or $\leq_{\rm disp}$. 
\end{proposition}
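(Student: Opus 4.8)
\emph{Proof plan.} The cornerstone is the memoryless property of the exponential distribution. Writing $R_1,\dots,R_n$ for the (random) indices in the order in which the $X_i$ fail, a standard computation gives $P(R_1=r_1,\dots,R_m=r_m)=\prod_{j=1}^m \lambda_{r_j}/(\Lambda-\sum_{i=1}^{j-1}\lambda_{r_i})$, and, conditionally on this event, the overshoots of the remaining $n-m$ variables above $X_{m:n}$ are again independent exponentials with rates $\lambda\backslash\{\lambda_{r_1},\dots,\lambda_{r_m}\}$, independently of $X_{m:n}$ itself. Hence $X_{k:n}-X_{m:n}$ is distributed as a \emph{mixture}, over the permutations $\mathbf r$ with weights $P(\mathbf r)$, of the $(k-m)$th order statistic of $n-m$ exponentials with rates $\lambda\backslash\{\lambda_{r_1},\dots,\lambda_{r_m}\}$. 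By exchangeability the same reduction collapses $Y_{k:n}-Y_{m:n}$ to a single distribution, the $(k-m)$th order statistic of $n-m$ i.i.d.\ $\mathrm{Exp}(\gamma)$ variables. Thus I must compare a homogeneous order statistic against a mixture of heterogeneous ones; note that Theorem~\ref{thm1} \emph{cannot} be applied term by term, since the stochastic orders need not survive the mixing, and indeed \eqref{char} involves only an average of the quantities $s_{k-m}^{[\mathbf r]}(\lambda)$.

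To handle the mixture I would exploit the star order. Corollary~\ref{coro1} gives $Y_{k:n}-Y_{m:n}\leq_* X_{k:n}-X_{m:n}$, and since $\leq_*$ is preserved when its smaller argument is rescaled, this holds for every $\gamma>0$. Writing $G_\gamma$ and $F$ for the two distribution functions, the single-crossing form of $\leq_*$ (with $c=1$) says that $G_\gamma(x)-F(x)$ changes sign at most once, from $-$ to $+$, as $x$ increases. Consequently $Y_{k:n}-Y_{m:n}\leq_{\rm st} X_{k:n}-X_{m:n}$, i.e.\ $G_\gamma\geq F$ everywhere, holds if and only if $G_\gamma-F$ is nonnegative near the origin, which is decided by the leading coefficients as $x\downarrow 0$. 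Since the distribution function of the $(k-m)$th order statistic of exponentials with rates $\mu_1,\dots,\mu_{n-m}$ behaves like $s_{k-m}(\mu_1,\dots,\mu_{n-m})\,x^{k-m}$, the leading coefficient of $F$ is $\sum_{\mathbf r}P(\mathbf r)\,s_{k-m}^{[\mathbf r]}(\lambda)$ while that of $G_\gamma$ is $s_{k-m}(\gamma,\dots,\gamma)={n-m\choose k-m}\gamma^{k-m}$. Comparing them yields exactly \eqref{char}, so $\leq_{\rm st}$ is equivalent to \eqref{char}; the boundary case of equality follows by a limiting argument, as $\leq_{\rm st}$ is preserved under the monotone limit $\gamma\downarrow$.

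It remains to upgrade $\leq_{\rm st}$ to the other two orders. For the dispersive order I would use that $\leq_*$ and $\leq_{\rm st}$ together force $\leq_{\rm disp}$: writing $\phi=F^{-1}G_\gamma$, the star order gives $(\phi(x)/x)'\geq 0$, hence $\phi'(x)\geq\phi(x)/x$, while $\leq_{\rm st}$ gives $\phi(x)\geq x$, so $\phi'(x)\geq 1$, which is $\leq_{\rm disp}$. Conversely, since both spacings have left endpoint $0$, $\leq_{\rm disp}$ implies $\leq_{\rm st}$; thus $\leq_{\rm st}\Leftrightarrow\leq_{\rm disp}\Leftrightarrow\eqref{char}$. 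For the hazard rate order one implication is free, $\leq_{\rm hr}\Rightarrow\leq_{\rm st}$. For the other I would invoke the fact that the homogeneous spacing $Y_{k:n}-Y_{m:n}$, being the $(k-m)$th order statistic of i.i.d.\ exponentials (equivalently a hypoexponential sum), has increasing hazard rate. Combining $\leq_{\rm disp}$ with this IFR property gives $\leq_{\rm hr}$: writing $A,B$ for the two spacings and $r_A,r_B$ for their hazard rates, $\phi'\geq 1$ gives $r_A(x)=\phi'(x)\,r_B(\phi(x))\geq r_B(\phi(x))$, and since $r_A$ is nondecreasing and $\phi(x)\geq x$ one gets $r_B(\phi(x))\leq r_A(x)\leq r_A(\phi(x))$; as $\phi$ maps $(0,\infty)$ onto $(0,\infty)$ this yields $r_A\geq r_B$, i.e.\ $Y_{k:n}-Y_{m:n}\leq_{\rm hr} X_{k:n}-X_{m:n}$.

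The principal obstacle is precisely the mixing step: because $X_{k:n}-X_{m:n}$ is a genuine mixture, no termwise comparison is available, and one must instead localize the usual stochastic order at the origin. This is what the scale-invariant star order of Corollary~\ref{coro1} buys, reducing the entire comparison to matching the two leading coefficients in \eqref{char}; the separate hazard-rate direction then leans on the special IFR structure of the homogeneous order statistic, a feature unavailable on the heterogeneous side.
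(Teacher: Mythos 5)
Your proposal is correct and follows essentially the same route as the paper: the memoryless-property mixture representation of $X_{k:n}-X_{m:n}$ (the paper's display (\ref{spacing})), the star order from Corollary~\ref{coro1} reducing $\leq_{\rm st}$ to a comparison of the leading $x^{k-m}$ coefficients at the origin, and the standard upgrades $\leq_*{}+{}\leq_{\rm st}\Rightarrow\leq_{\rm disp}$ and $\leq_{\rm disp}{}+{}$IFR$\;\Rightarrow\leq_{\rm hr}$. You simply spell out in full the steps the paper delegates to the proofs of Theorems~\ref{thm1} and \ref{thm2}.
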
 

In the special case of $m=1$ and $k=n$, which corresponds to comparing the sample ranges, the condition (\ref{char}) reduces to 
$$\gamma \geq \left(\frac{\prod_{i=1}^n \lambda_i}{\Lambda / n} \right)^{1/(n-1)},$$
and we recover Theorem~4.1 of Xu and Balakrishnan (2012).  In the case of ordinary spacings, that is, $k=m+1$, Proposition~\ref{prop1} can also be derived using the log-concavity arguments of Yu (2009).  As noted by P\v{a}lt\v{a}nea (2011), however, it is difficult to implement such an argument in general. 

In Section~2 we prove Theorem~\ref{thm2}, from which Theorem~\ref{thm1}, Corollary~\ref{coro1} and Proposition~\ref{prop1} are deduced.  We refer to P\v{a}lt\v{a}nea (2008) for numerical illustrations and Da et al. (2014) for potential applications of these results.  It would be interesting to see if results similar to Theorem~\ref{thm1} and Proposition~\ref{prop1} can be obtained for the likelihood ratio order, for which Theorem~\ref{thm2}, based on the star order, is not helpful. 

\section{Derivation of main results}
We need the following closure property of the star order with respect to mixtures. 
\begin{lemma}[Lemma~3.1 of Xu and Balakrishnan (2012)]
\label{lem1}
Let $F$ be a distribution function with density $f$ supported on $(0, \infty)$ such that $f(e^x)$ is log concave in $x\in\mathbb{R}$.  Let $G_1,\ldots, G_n$ be distribution functions with densities $g_1,\ldots, g_n$ supported on $(0,\infty)$ such that $F\leq_* G_i$ for each $i=1,\ldots, n$.  Then $F\leq_* \sum_{i=1}^n p_i G_i$ for $p_i>0$ such that $\sum_{i=1}^n p_i =1$. 
\end{lemma}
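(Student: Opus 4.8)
The plan is to recast the star order as a pointwise comparison of density--quantile functions and then close the mixture with Jensen's inequality. For a distribution $H$ on $(0,\infty)$ with density $h$, write $\eta_H(u)=H^{-1}(u)\,h(H^{-1}(u))$ for $u\in(0,1)$. Differentiating the defining ratio $G^{-1}F(x)/x$ and substituting $u=F(x)$, one sees that $G^{-1}F(x)/x$ is increasing exactly when $x f(x)\geq G^{-1}F(x)\,g(G^{-1}F(x))$ for all $x$, that is,
\[
F\leq_* G\quad\Longleftrightarrow\quad \eta_F(u)\geq \eta_G(u)\ \text{for all }u\in(0,1).
\]
Under the hypotheses this gives $\eta_{G_i}(u)\leq\eta_F(u)$ for every $i$ and every $u$, and reduces the goal to proving $\eta_{\bar G}(u)\leq\eta_F(u)$, where $\bar G=\sum_i p_i G_i$ has density $\bar g=\sum_i p_i g_i$.

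Next I would show that a mixture interacts transparently with this reformulation. Fix $u$, set $q=\bar G^{-1}(u)$, and put $u_i=G_i(q)$, so that $q=G_i^{-1}(u_i)$ and $\sum_i p_i u_i=\bar G(q)=u$. Since $\eta_{\bar G}(u)=q\,\bar g(q)=\sum_i p_i\, q\,g_i(q)=\sum_i p_i\,\eta_{G_i}(u_i)$, the quantile--density of the mixture is literally a convex combination of the $\eta_{G_i}$ evaluated at the levels $u_i$. Applying $\eta_{G_i}\leq\eta_F$ termwise and then Jensen's inequality for a \emph{concave} $\eta_F$ yields
\[
\eta_{\bar G}(u)=\sum_{i=1}^n p_i\,\eta_{G_i}(u_i)\leq \sum_{i=1}^n p_i\,\eta_F(u_i)\leq \eta_F\!\Big(\sum_{i=1}^n p_i u_i\Big)=\eta_F(u),
\]
which is exactly $F\leq_*\bar G$.

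The crux of the argument, and the only place the log-concavity assumption enters, is establishing that $\eta_F$ is concave on $(0,1)$. I would pass to $V=\log X$ with $X\sim F$: its density $f_V(v)=e^v f(e^v)$ satisfies $\log f_V(v)=v+\log f(e^v)$, so $f_V$ is log concave if and only if $f(e^x)$ is. Writing $h(u)=f_V(F_V^{-1}(u))$, a direct computation gives $h'(u)=(\log f_V)'(F_V^{-1}(u))$ and $h''(u)=(\log f_V)''(F_V^{-1}(u))/f_V(F_V^{-1}(u))$, so $h$ is concave exactly when $f_V$ is log concave. Because $F_V^{-1}(u)=\log F^{-1}(u)$ forces $h(u)=F^{-1}(u)\,f(F^{-1}(u))=\eta_F(u)$, the hypothesis delivers the required concavity. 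I expect this equivalence between log-concavity of $f(e^x)$ and concavity of the density--quantile function $\eta_F$ to be the main technical point; once it is in hand, the mixture closure follows from the quantile reformulation and Jensen as above, the only remaining care being the mild regularity (positive continuous densities) that justifies the differentiations.
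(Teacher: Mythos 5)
The paper does not prove this lemma at all --- it is imported verbatim as Lemma~3.1 of Xu and Balakrishnan (2012) --- so there is no in-paper argument to compare against; what matters is whether your self-contained proof is sound, and it is. Your reduction of $F\leq_* G$ to the pointwise inequality $\eta_F(u)\geq \eta_G(u)$ for the density--quantile function $\eta_H(u)=H^{-1}(u)\,h(H^{-1}(u))$ is the correct differentiated form of the definition; the identity $\eta_{\bar G}(u)=\sum_i p_i\,\eta_{G_i}(u_i)$ with $u_i=G_i(\bar G^{-1}(u))$ and $\sum_i p_i u_i=u$ is exactly right; and the equivalence between log-concavity of $f(e^x)$ and concavity of $\eta_F$ checks out, since $\eta_F$ is precisely the density--quantile function of $V=\log X$ and $\eta_F'(u)=(\log f_V)'(F_V^{-1}(u))$ is nonincreasing as a composition of a nonincreasing with an increasing map (so you do not even need the second-derivative computation, which is convenient since log-concavity does not give twice differentiability). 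This is, in substance, the classical proof: $F\leq_* G$ is the dispersive order between $\log X$ and $\log Y$, and your argument is the standard Jensen-type proof that the dispersive order is closed under mixtures when the dominated variable has a log-concave density, transported through the exponential map. The only caveats are the regularity ones you already flag (densities positive on all of $(0,\infty)$ so that the inverses are well defined and $G^{-1}F(x)/x$ is absolutely continuous, making ``nonnegative a.e.\ derivative'' equivalent to ``increasing''); these are harmless under the hypotheses as stated.
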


We also need the following representation of order statistics from heterogeneous exponential samples. 
\begin{lemma}[P\v{a}lt\v{a}nea (2011), Theorem~4.1]
\label{lem2}
Let $S=\{X_1, \ldots, X_n\}$ be a set of $n>1$ independent exponential random variables with rates $\lambda_1,\ldots, \lambda_n$ respectively.  Denote $S^{[i]}=S \backslash \{X_i\}$ and let $X_{j:(n-1)}^{[i]}$ be the $j$th order statistic from $S^{[i]}$ with distribution function $F_{j:(n-1)}^{[i]}$ for $j\leq n-1$.  Then for $k\leq n-1$ the $(k+1)$th order statistic $X_{(k+1):n}$ from $S$ is the sum of two independent random variables: $X_{1:n}$, which has an exponential distribution with rate $\Lambda = \sum_{j=1}^n \lambda_j$, and a mixture of order statistics with distribution function $\Lambda^{-1} \sum_{i=1}^n \lambda_i F_{k:(n-1)}^{[i]}$. 
\end{lemma}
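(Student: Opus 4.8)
The plan is to exploit the memoryless property of the exponential distribution by decomposing $X_{(k+1):n}$ about the sample minimum. Let $M = X_{1:n} = \min_{1\le i\le n} X_i$, and let $I$ denote the (almost surely unique) index at which this minimum is attained. First I would record the standard facts about competing exponentials: $M$ is exponentially distributed with rate $\Lambda$, the minimizer satisfies $P(I=i)=\lambda_i/\Lambda$, and, crucially, $M$ and $I$ are independent.

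Next I would invoke memorylessness in a sharpened form: conditional on $\{I=i\}$ and $\{M=t\}$, the excesses $X_j - t$ for $j\ne i$ are independent exponential variables with their original rates $\lambda_j$, and this conditional law does not depend on $t$. Consequently, given $I=i$, the family $\{X_j - M:\ j\ne i\}$ has exactly the distribution of the reduced sample $S^{[i]}$, and it is independent of $M$.

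The third step is purely order-theoretic and holds pointwise: once the smallest value $M$ is removed, the remaining $n-1$ order statistics of $\{X_1,\dots,X_n\}$ are simply $M$ added to the ordered excesses. In particular $X_{(k+1):n} = M + W$, where $W$ is the $k$th order statistic of $\{X_j - M:\ j\ne i\}$ on the event $\{I=i\}$. Since $M$ is independent of $I$ and, given $I$, independent of the excesses, $M$ is independent of the pair (minimizer, excesses) and hence of $W$. Conditioning on $I=i$ shows that $W$ has distribution function $F_{k:(n-1)}^{[i]}$ with probability $\lambda_i/\Lambda$, so unconditionally $W$ has distribution function $\Lambda^{-1}\sum_{i=1}^n \lambda_i F_{k:(n-1)}^{[i]}$. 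Identifying $M$ with $X_{1:n}$ yields the claimed representation as a sum of the independent pieces.

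The main obstacle is the second step: rigorously justifying that, after conditioning on both the identity and the value of the minimizer, the residual lifetimes are fresh independent exponentials that are jointly independent of the minimum itself. This is precisely where the special structure of the exponential law enters, and I would establish it by a direct computation of the joint distribution on the events $\{X_i=\min_j X_j,\ X_i\le t\}$ rather than by an informal appeal to memorylessness, so that the independence of $M$ from $(I,\text{excesses})$ is made explicit.
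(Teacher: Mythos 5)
Your proposal is correct. Note that the paper does not prove this lemma at all---it is imported verbatim as Theorem~4.1 of P\v{a}lt\v{a}nea (2011)---so there is no internal proof to compare against; what you have written is a legitimate self-contained derivation of a cited result. Your argument is the standard one, and the step you flag as the main obstacle is resolved exactly by the computation you propose: writing the joint law of $(I, M, \{E_j\}_{j\neq i})$ with $E_j = X_j - M$ on the event $\{I=i\}$ gives the density
\[
\lambda_i e^{-\lambda_i t}\prod_{j\neq i}\lambda_j e^{-\lambda_j(t+e_j)}
\;=\;\frac{\lambda_i}{\Lambda}\cdot \Lambda e^{-\Lambda t}\cdot \prod_{j\neq i}\lambda_j e^{-\lambda_j e_j},
\]
which factorizes into the three pieces you need: $P(I=i)=\lambda_i/\Lambda$, $M\sim{\rm expo}(\Lambda)$ independent of the pair $(I,\{E_j\})$, and, given $I=i$, excesses distributed as the reduced sample $S^{[i]}$. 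The pointwise identity $X_{(k+1):n}=M+E_{(k)}$ (valid almost surely since ties have probability zero) and the mixture decomposition over $\{I=i\}$ then deliver the stated representation. The only cosmetic caution is that conditioning on the null event $\{M=t\}$ should indeed be replaced, as you indicate in your final paragraph, by the displayed joint-density factorization; with that, the proof is complete.
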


Finally, we need some stochastic comparison results concerning convolutions of gamma variables.  For independent random variables $Z_1,\ldots, Z_n\sim {\rm gamma}(\alpha)$ let $F_\theta$ denote the distribution function of the weighted sum $\sum_{i=1}^n \theta_i Z_i$ where $\theta=(\theta_1,\ldots, \theta_n)$ is a vector of positive weights.  We use $\prec_w$ to denote weak sub-majorization (Marshall et al. (2009)).  The following Lemma is a consequence of Theorem~1 of Yu (2011). 

\begin{lemma}
\label{lem.yu}
For $\alpha >0$, if $\log \eta \prec_w \log \theta$ then $F_\eta\leq_{\rm st} F_\theta$. 
\end{lemma}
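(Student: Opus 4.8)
The plan is to peel off the weak sub-majorization hypothesis into two simpler pieces---a componentwise domination and an equal-total (log-)majorization---and then hand the genuinely nontrivial piece to Theorem~1 of Yu (2011). First I would invoke the standard characterization of weak sub-majorization (Marshall et al.\ (2009)): since $\log\eta \prec_w \log\theta$, there exists a vector $u=(u_1,\ldots,u_n)$ with $\log\eta \le u$ componentwise and $u \prec \log\theta$ in the ordinary majorization order, so in particular $\sum_i u_i = \sum_i \log\theta_i$. Exponentiating, set $\mu_i = e^{u_i}$; this produces a vector of positive weights with $\eta \le \mu$ componentwise and $\log\mu \prec \log\theta$, and with $\prod_i \mu_i = \prod_i \theta_i$.

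The first relation is disposed of immediately: because the $Z_i$ are nonnegative and $\eta_i \le \mu_i$ for every $i$, we have $\sum_i \eta_i Z_i \le \sum_i \mu_i Z_i$ pathwise, whence $F_\eta \le_{\rm st} F_\mu$. For the second relation, $\log\mu \prec \log\theta$ is exactly a log-majorization with equal total, which is the situation covered by Theorem~1 of Yu (2011); this yields the stochastic comparison $F_\mu \le_{\rm st} F_\theta$ for weighted sums of i.i.d.\ gamma variables. Chaining the two by transitivity of $\le_{\rm st}$ gives $F_\eta \le_{\rm st} F_\mu \le_{\rm st} F_\theta$, which is the claim.

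The substantive content---that spreading the log-weights in the majorization sense increases the weighted gamma sum stochastically---resides entirely in Yu's theorem, which I am treating as given, so within this derivation the only thing to get right is the reduction. The point I would check most carefully is the \emph{direction} of the weak sub-majorization decomposition: one needs $u$ sitting \emph{above} $\log\eta$ componentwise and \emph{below} $\log\theta$ in the majorization order, and one must confirm that exponentiation preserves both the componentwise inequality (immediate, as $\exp$ is increasing) and the equal-total condition $\prod_i\mu_i=\prod_i\theta_i$ required to apply Yu's theorem. I anticipate no further difficulty, since positivity of all weights is maintained throughout and the shape parameter $\alpha$ never enters the reduction, surfacing only through the cited theorem.
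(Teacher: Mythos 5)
Your proof is correct and is essentially the derivation the paper intends: the paper gives no explicit argument, simply asserting the lemma is ``a consequence of Theorem~1 of Yu (2011),'' and your reduction---splitting $\log\eta\prec_w\log\theta$ via the standard Marshall--Olkin decomposition into a componentwise bound (handled pathwise) plus an equal-sum log-majorization (handled by Yu's theorem)---is the natural way to make that consequence precise. The directions of both comparisons are handled correctly, so nothing is missing.
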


Lemma~\ref{lem3} asserts a unique crossing between the distribution functions of two weighted sums of iid gamma variables when the weights form a special configuration.  Lemma~\ref{lem3} is an important step in Yu's (2016) investigation of the unique crossing conjecture of Diaconis and Perlman (1990); it is also a key tool in deriving our main results (we only need the $\alpha=1$ case). 

\begin{lemma}[Theorem~1 of Yu (2016)]
\label{lem3}
Suppose $\alpha \geq 1$.  Suppose $0<\theta_1\leq \cdots \leq \theta_n$ and $\eta_1\leq \cdots \leq \eta_n$ and (a) there exists $2\leq k\leq n$ such that $\theta_i<\eta_i$ for $i< k$ and $\theta_i > \eta_i$ for $i\geq k$;
(b) $\prod_{i=1}^n \eta_i > \prod_{i=1}^n \theta_i$.  Then there exists $x_0\in (0, \infty)$ such that $F_\eta(x) < F_\theta(x)$ for $x\in (0, x_0)$ and the inequality is reversed for $x> x_0$.
\end{lemma}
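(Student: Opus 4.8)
The plan is to transfer the single-crossing statement for the distribution functions into a sign-change count for the densities. Write $f_\theta, f_\eta$ for the densities of $F_\theta, F_\eta$, put $h = f_\eta - f_\theta$, and observe that $H(x) := F_\eta(x) - F_\theta(x) = \int_0^x h$ satisfies $H(0) = H(\infty) = 0$ and $\int_0^\infty h = 0$. The reduction I would use is this: if $h$ changes sign exactly twice on $(0,\infty)$, in the pattern $(-,+,-)$, then $H$ decreases, then increases, then decreases; combined with $H(0) = H(\infty) = 0$ this forces $H < 0$ on some $(0, x_0)$ and $H > 0$ on $(x_0, \infty)$, which is precisely the asserted crossing. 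So everything reduces to showing that $h$ has exactly this sign pattern.

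First I would read off the behavior of $h$ at the two endpoints. Near the origin, $\prod_i (1 + \theta_i s)^{-\alpha} \sim (\prod_i \theta_i)^{-\alpha} s^{-n\alpha}$ as $s \to \infty$, so by Watson's lemma $f_\theta(x) \sim (\prod_i \theta_i)^{-\alpha} x^{n\alpha - 1} / \Gamma(n\alpha)$ as $x \to 0^+$; hypothesis (b) then gives $f_\eta / f_\theta \to (\prod_i \theta_i / \prod_i \eta_i)^{\alpha} < 1$, so $h < 0$ near $0$. At infinity the density is dominated by the largest weight, and since hypothesis (a) yields $\theta_n > \eta_n$, the factor $e^{-x/\theta_n}$ in $f_\theta$ decays more slowly than $e^{-x/\eta_n}$ in $f_\eta$, so $h < 0$ near $\infty$ as well. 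Hence $h$ is negative at both ends, so it changes sign an even number of times; since $h \not\equiv 0$ (as $\theta \neq \eta$) and $\int_0^\infty h = 0$, it is positive somewhere and therefore changes sign at least twice. It remains only to establish the matching upper bound.

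The hard part, and the crux of the whole lemma, is to show that $h$ changes sign \emph{at most} twice. I would approach this through total positivity. In the case $\alpha = 1$ needed here, $f_\theta(x) = \sum_i c_i e^{-x/\theta_i}$ is an exponential sum, so $h$ is a generalized exponential sum in the $2n$ exponents $\{-1/\theta_i\} \cup \{-1/\eta_i\}$; by the Descartes-type bound stemming from the total positivity of the kernel $e^{-sx}$, the number of zeros of $h$ is at most the number of sign changes in its coefficient sequence once the atoms are ordered by exponent, and for general $\alpha \geq 1$ one uses P\'olya's extension to sums of terms $x^{j} e^{-x/\theta_i}$. The genuine difficulty is then to prove that the single-crossing hypothesis (a) on the weights, together with the product hypothesis (b), forces this ordered coefficient pattern to exhibit at most two sign changes --- a delicate accounting of how the exponents $-1/\theta_i$ and $-1/\eta_i$ interleave and of the signs of the partial-fraction coefficients. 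This is the substance of Yu (2016). I note that the stochastic-ordering tool in Lemma~\ref{lem.yu} does not suffice on its own: under (a) and (b) neither $F_\eta \leq_{\rm st} F_\theta$ nor its reverse holds, so a genuine variation-diminishing (total-positivity) argument, rather than a monotonicity argument, is unavoidable.
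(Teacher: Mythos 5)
Your proposal does not actually prove the lemma, and you say as much yourself: the entire content is concentrated in the claim that $h=f_\eta-f_\theta$ has at most two sign changes, and you explicitly defer that step as ``the substance of Yu (2016).'' The boundary analysis (the Tauberian argument at $0$ using hypothesis (b), the tail comparison at $\infty$ using $\theta_n>\eta_n$ from hypothesis (a), and the reduction from a single crossing of $F_\eta-F_\theta$ to a $(-,+,-)$ sign pattern of $h$) is correct and is indeed the easy half; but without the upper bound on the number of sign changes there is no proof. For what it is worth, the paper itself does not prove this lemma either --- it is imported wholesale as Theorem~1 of Yu (2016) (arXiv:1607.02689), precisely because the missing step is a substantial piece of work in its own right.

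I would also caution that the specific route you propose for the hard step is unlikely to go through as stated. The generalized Descartes rule bounds the number of zeros of $\sum_j a_j e^{-\mu_j x}$ by the number of sign changes of the coefficients taken in the order of the exponents $\mu_j$. For $\alpha=1$ the partial-fraction coefficients of each hypoexponential density alternate in sign when ordered by rate, so the difference $h$ is an exponential sum in $2n$ exponents whose merged coefficient sequence can exhibit up to $2n-1$ sign changes. Hypothesis (a) constrains only the pairwise comparisons $\theta_i$ versus $\eta_i$; it does \emph{not} determine how the two sets $\{1/\theta_i\}$ and $\{1/\eta_i\}$ interleave (e.g.\ both $\theta_1<\eta_1$ and $\theta_2<\eta_2$ are compatible with either order of $\eta_1$ and $\theta_2$), so a naive count of coefficient sign changes will in general exceed two, and the variation-diminishing bound by itself is too weak. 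The actual argument in Yu (2016) is structured quite differently (a deformation/continuity argument combined with total positivity, rather than a direct Descartes count on the partial-fraction expansion). So the gap is not merely an omitted routine verification; the proposed mechanism for closing it would need to be replaced.
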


\begin{proof}[Proof of Theorem~\ref{thm2}]
Let us use induction.  The $k=1$ case is trivial.  Suppose the claim holds for a certain $k\geq 1$ and all $n\geq k$.  We shall prove that it holds for $k+1$ and all $n\geq k+1$.  Assume $\lambda_i,\, i=1,\ldots,n,$ are not all equal, and let $F_{k:n}^{(\tau)}$ denote the distribution function of the $k$th order statistic from a sample of $n$ iid exponential variables with rate $\tau$.  In the notation of Lemma~\ref{lem2}, the induction hypothesis yields $F_{k:(n-1)}^{(\tau)} \leq_* F_{k:(n-1)}^{[i]}$ for $\tau>0,\ n\geq k+1,$ and $i=1,\ldots, n$.  As noted by Xu and Balakrishnan (2012), $F_{k:(n-1)}^{(\tau)}$ has a density $f$ such that $f(e^x)$ is log-concave.  By Lemma~\ref{lem1}, we have 
\begin{equation}
\label{star}
F_{k:(n-1)}^{(\tau)} \leq_* \sum_{i=1}^n \frac{\lambda_i}{\Lambda} F_{k:(n-1)}^{[i]}.
\end{equation}
As noted by Da et al. (2014), when $x\downarrow 0$ we have
$$F_{k:(n-1)}^{[i]}(x) = s_k^{[i]}(\mathbf{\lambda}) x^k + o(x^k),\quad F_{k:(n-1)}^{(\tau)} = {n-1 \choose k} \tau^k x^k + o(x^k),$$
where $\mathbf{\lambda} = (\lambda_1, \ldots, \lambda_n)$ and $s_k^{[i]}(\mathbf{\lambda}) = s_k(\mathbf{\lambda}\backslash \{\lambda_i\})$.  Thus, if ${n-1 \choose k} \tau^k \geq \sum_{i=1}^n \lambda_i s_k^{[i]}(\mathbf{\lambda})/\Lambda $, or equivalently $\tau\geq \tau^*$ with $\tau^* = (n s_{k+1}(\mathbf{\lambda})/\Lambda /{n\choose k+1})^{1/k}$, 
then 
\begin{equation}
\label{st}
F_{k:(n-1)}^{(\tau)} \leq_{\rm st} \sum_{i=1}^n \frac{\lambda_i}{\Lambda} F_{k:(n-1)}^{[i]}.
\end{equation}
Analogous considerations as $x\to\infty$ (P\v{a}lt\v{a}nea, 2011) reveal that if $\tau \leq \tau_*\equiv \sum_{i=1}^{n-k} \lambda_{(i)}/(n-k)$, where $\lambda_{(1)}, \ldots, \lambda_{(n)}$ are $\lambda$ arranged in increasing order, then (\ref{st}) holds with the direction of $\leq_{\rm st}$ reversed.  For $\tau\in (\tau_*, \tau^*)$, star ordering implies that there exists $x_0>0$ such that 
\begin{equation}
\label{one.cross}
F_{k:(n-1)}^{(\tau)}(x)\leq \sum_{i=1}^n \frac{\lambda_i}{\Lambda} F_{k:(n-1)}^{[i]}(x),\quad x\in (0, x_0),
\end{equation}
and the inequality is reversed if $x\in (x_0, \infty)$.  One can show strict inequality, that is, the crossing point is unique.  Indeed, if there exists another $\tilde{x}\in (0, x_0)$, for example, such that equality holds in (\ref{one.cross}) at $x=\tilde{x}$, then a slight increase in $\tau$ would produce at least two crossings, near $x_0$ and $\tilde{x}$, respectively (equality cannot hold for all $x\in [\tilde{x}, x_0]$ unless the two distributions are identical, because these distribution functions can be written as linear combinations of exponential functions and are therefore analytic).  

In view of Lemma~\ref{lem2}, we can convolve both sides of (\ref{star}) with an exponential with rate $\Lambda$ and obtain that ${\rm expo}(\Lambda) * F_{k:(n-1)}^{(\tau)}$ crosses $F_{(k+1):n}$ (the distribution function of $X_{(k+1):n}$) exactly once, and from below, for $\tau\in (\tau_*, \tau^*)$.  That there is at most one crossing follows from variation diminishing properties of TP2 kernels (Karlin 1968).  Upon close inspection there is exactly one crossing at a unique point.  In particular, because $\Lambda > (n-k) \tau> (n-k) \tau_*$, convolving with ${\rm expo}(\Lambda)$ cannot reverse the sign of $F_{k:(n-1)}^{(\tau)}(x) - \Lambda^{-1} \sum_{i=1}^n \lambda_i F_{k:(n-1)}^{[i]}(x)$ as $x\to\infty$.  We then recognize the crossing point, denoted by $x_*$, as a decreasing, continuous function of $\tau$, because $F_{k:(n-1)}^{(\tau)}$ stochastically decreases in $\tau$.  Furthermore, the above analysis at the end points $\tau_*$ and $\tau^*$ shows that $x_*(\tau)\uparrow \infty$ as $\tau\downarrow \tau_*$ and $x_*(\tau)\downarrow 0$ as $\tau\uparrow \tau^*$.  

The distribution of $Y_{(k+1):n}$ is the convolution 
$$F_{(k+1):n}^{(\gamma)}= {\rm expo}(n\gamma) * {\rm expo}((n-1)\gamma) *\cdots * {\rm expo}((n-k)\gamma) .$$ 
Suppose $F_{(k+1): n}^{(\gamma)}$ crosses $F_{(k+1):n}$ at some $x^*>0$.  Then we can choose $\tau\in (\tau_*, \tau^*)$ such that $x_*(\tau) = x^*$, that is, $ {\rm expo}(\Lambda) * F_{k:(n-1)}^{(\tau)}$ crosses $F_{(k+1):n}$ at exactly $x^*$, from below.  If $\gamma \geq \Lambda/n$ then Maclaurin's inequality yields $\gamma\geq \tau^*> \tau$, which implies that 
\begin{equation}
\label{no.cross}
F_{(k+1):n}^{(\gamma)} \leq_{\rm st} {\rm expo}(\Lambda) * F_{k:(n-1)}^{(\tau)},
\end{equation}
contradicting the existence of $x^*$.  If $\gamma\leq \tau$ then the inequality (\ref{no.cross}) is reversed, and there is again no crossing.  Thus we must have $\gamma\in (\tau, \Lambda/n)$.  If $\prod_{i=0}^k ((n-i)\gamma) \geq \Lambda \prod_{i=1}^k ((n-i) \tau)$ then one can show (the $\log$ applies element-wise)
$$-\log(n\gamma, (n-1)\gamma,\ldots, (n-k)\gamma)\prec_w -\log(\Lambda, (n-1)\tau, \ldots, (n-k)\tau),
$$
which again implies (\ref{no.cross}) by Lemma~\ref{lem.yu}.  For the remaining case, $\prod_{i=0}^k ((n-i)\gamma) < \Lambda \prod_{i=1}^k ((n-i) \tau)$, the conditions of Lemma~\ref{lem3} are satisfied.  It follows that $F_{(k+1):n}^{(\gamma)}$ crosses ${\rm expo}(\Lambda) * F_{k:(n-1)}^{(\tau)}$ exactly once, from below, at the same point $x^*$.  We deduce that $F_{(k+1):n}^{(\gamma)}$ crosses $F_{(k+1):n}$ exactly once, from below, at $x^*$.  Since $\gamma>0$ is arbitrary, we have $Y_{(k+1):n}\leq_* X_{(k+1):n}$. 
\end{proof}
\begin{proof}[Proof of Theorem~\ref{thm1}]
Theorem~\ref{thm1} can be deduced from Theorem~\ref{thm2} as discussed by Xu and Balakrishnan (2012).  Specifically, given $Y_{k:n}\leq_* X_{k:n}$, we have the equivalence 
$$Y_{k:n}\leq_{\rm st} X_{k:n}\Longleftrightarrow Y_{k:n}\leq_{\rm disp} X_{k:n}.$$
Since $Y_{k:n}$ has increasing failure rate, dispersive ordering implies hazard rate ordering. 
\end{proof}
\begin{proof}[Proof of Corollary~\ref{coro1}]
Extending Lemma~\ref{lem2} we can write the distribution function of $X_{k:n}-X_{m:n}$ as 
\begin{equation}
\label{spacing}
\sum_{\mathbf{r}} \prod_{j=1}^m \frac{\lambda_{r_j}}{\Lambda -\sum_{i=1}^{j-1} \lambda_{r_i}} F_{(k-m):(n-m)}^{[\mathbf{r}]},
\end{equation}
where the notation is the same as in the statement of Proposition~\ref{prop1}, and $F_{(k-m):(n-m)}^{[\mathbf{r}]}$ denotes the distribution function of the $(k-m)$th order statistics of $\{X_1, \ldots, X_n\}\backslash\{X_{r_1}, \ldots, X_{r_m}\}$.  The distribution of $Y_{k:n} - Y_{m:n}$ is simply $F_{(k-m):(n-m)}^{(\gamma)}$.  The claim therefore follows from Theorem~\ref{thm2} and Lemma~\ref{lem1}. 
\end{proof}

\begin{proof}[Proof of Proposition~\ref{prop1}]
Given the star ordering, we can establish the characterization for $\leq_{\rm st}$ by examining the distribution function (\ref{spacing}) near $x=0$ (we did this for $m=1$ in the proof of Theorem~\ref{thm2}).  Characterizations for $\leq_{\rm hr}$ and $\leq_{\rm disp}$ follow as in the proof of Theorem~\ref{thm1}.
\end{proof}

\end{document}